\theoremstyle{plain}
\newtheorem{theorem}{Theorem}
\newtheorem{lemma}[theorem]{Lemma}
\newtheorem{proposition}[theorem]{Proposition}
\newtheorem{remark}[theorem]{Remark}
\newtheorem{question}[theorem]{Question}
\newcounter{mycount}
\newenvironment{romlist}{\begin{list}{\rm(\roman{mycount})}%
{\usecounter{mycount}\labelwidth=1cm\itemsep 0pt}}{\end{list}}
\newenvironment{letlist}{\begin{list}{\rm(\alph{mycount})}%
{\usecounter{mycount}\labelwidth=1cm\itemsep 0pt}}{\end{list}}
\newcommand\XX{{\mathbb X}}
\newcommand\RR{{\mathbb R}} 
\newcommand\PP{{\mathbb P}} 
\newcommand\qq{\qquad} 
\newcommand\q{\quad}
\newcommand\Si{\Sigma}
\newcommand\om{\omega} 
\newcommand\Om{\Omega}
\newcommand\sB{{\mathcal B}} 
\newcommand\NN{{\mathbb N}}
\newcommand\resp{respectively}
\newcommand\oo{\infty}
\newcommand\sA{{\mathcal A}}
\renewcommand\o{\text{{\rm o}}}
\renewcommand\H{\red{\text{\rm H}}}
\newcommand\T{\blue{\text{\rm T}}}
\newcommand\A{\text{\rm A}}
\newcommand\B{\text{\rm B}}
\newcommand\Aa{\^ A}
\newcommand{\red}[1]{{\textcolor{red}{#1}}}
\newcommand{\blue}[1]{{\textcolor{blue}{#1}}}
\title[Alice and Bob on $\XX$: reversal, coupling, renewal]{Alice and Bob on $\XX$:\\ reversal, coupling, renewal}
\author{Geoffrey R.\ Grimmett}
\address{Centre for
Mathematical Sciences, Cambridge University, Wilberforce Road,
Cambridge CB3 0WB, UK} 
\email{grg@statslab.cam.ac.uk}
\date{submitted August 30, 2024, revised September 1, 2025} %\today}
\keywords{Coin tosses, random game, central limit theorem}
\subjclass[2010]{60C05, 91A15}
\begin{document}
\begin{abstract}
A neat question involving coin flips surfaced on $\XX$, and generated an intensive
\lq storm' of `social mathematics'. In a sequence of flips of a fair coin,
Alice wins a point at each appearance of two consecutive heads, and Bob wins a point whenever a 
head is followed immediately by a tail. Who is more likely to win the game? 
The subsequent discussion illustrated conflicting intuitions, and concluded with the correct answer
(it is a close thing).
It is explained here why the context of the question is interesting and how it may be answered in a quantitative
manner using the probabilistic techniques of reversal, coupling, and renewal.
\end{abstract}

\maketitle

\section{The problem}

\begin{question}\label{qn:1}
A fair coin is tossed $n$ times. Alice scores one point  at each appearance of two
consecutive heads, and Bob scores one point each time a head is followed immediately by a tail. 
The winner is the player who accumulates more points.
Who is the more likely to win?
\end{question}

Here are some intuitive arguments in order of decreasing naivet\'e.
\begin{letlist}
\item In any pair of consecutive coin tosses, Alice wins a point with probability $\frac14$,
and Bob wins a point with the same probability. Therefore, each has a mean total
score of $\frac14(n-1)$. Since these means are equal, the game is fair and each player has the same probability of winning.
\item Alice's points tend to arrive in clusters, whereas Bob's are isolated. 
That favours Alice, so she is more likely to win.
\item When Alice wins a point, she has an increased chance of winning again. 
Therefore, when she wins, she tends to do so by a wider margin than does Bob.
However, her \emph{mean total}
is the same as Bob's. It follows that Bob has a greater \emph{probability} of winning.
\end{letlist}

This question (with $n=100$) was posed by Daniel Litt on his $\XX$ feed \cite{Litt}
on 16 March 2024. 
At the current time of writing, his post has attracted 1.2M views. 
First responders tended to favour Alice above Bob on the grounds of
argument (b), and a vote was reported as placing
Alice (26.3\%) over Bob (10.2\%), with 42.8\%\ of the 51,588 voters supporting equality. 
Later simulations appeared to show that Bob has a slight advantage.

ChatGPT has changed its position on the question over the intervening months. Initially it favoured 
Alice on the grounds given in (b) above. At the time of writing it has veered towards (a),
with the conclusion that
``The game is fair to both players in terms of expected outcomes.''
Fair enough, but not very helpful. There remains hope for mathematicians.\footnote{\emph{Note on revision}:
This seems to be the view of ChatGPT also, which has now discovered the reference \cite{Seg}.}

More complete answers began to surface on Litt's $\XX$ feed fairly soon after the original post,
and
suggestions  were made for rigorous proof. It is not easy to convey the details 
and to check the correctness of a mathematical
argument within the
confined format of  $\XX$, and  hence this note. We present three conclusions.

\begin{theorem}\label{thm:0}
Consider $n$ flips of a fair coin.
\begin{letlist}
\item Bob is (strictly) more likely than Alice  to win when $n \ge 3$.
\item $\PP(\text{\rm Bob wins}) - \PP(\text{\rm Alice wins}) \sim c/\sqrt n$ as $n\to\oo$, where 
$$
c= \frac1{2\sqrt \pi}\approx 0.282.
$$
\item
The probability of a tie is asymptotically $1/\sqrt {\pi n}$.
\end{letlist}
\end{theorem}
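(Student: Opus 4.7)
The plan is to analyze the signed difference $D_n := A_n - B_n$, for which one computes $\EE[D_n] = 0$, $\mathrm{Var}(D_n) = (n-1)/2$, and $\EE[D_n^3] = 3(n-2)/4$. All three conclusions of the theorem will flow from a local central limit theorem (LCLT) for $D_n$, refined by an Edgeworth correction that picks up the positive third cumulant.

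I first set up the renewal structure named in the title. Cutting the sequence at every T produces independent blocks $H^{k_j}T$ with $k_j$ iid $\mathrm{Geom}(1/2)$ on $\{0,1,\ldots\}$; the $j$-th block contributes $\xi_j := (k_j-2)\mathbf{1}\{k_j\ge 1\}$ to $D_n$, and one checks $\EE\xi_j = 0$, $\mathrm{Var}(\xi_j) = 1$, $\EE\xi_j^3 = 3$. Hence $D_n = \sum_{j=1}^{N_T}\xi_j + \eta_n$, with $N_T$ the (random) number of T's and $\eta_n$ a bounded boundary contribution from the trailing H-run; this presents $D_n$ as a lattice-valued renewal-reward sum to which the Edgeworth LCLT applies. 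The reversal input is the identity $\#HT - \#TH = \mathbf{1}\{X_1=H\}-\mathbf{1}\{X_n=H\}$, giving $D_n \stackrel{d}{=} D_n + \mathbf{1}\{X_1=H\} - \mathbf{1}\{X_n=H\}$ under the measure-preserving involution that reverses the sequence.

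Parts (b) and (c) then follow from the Edgeworth form
$$\PP(D_n = k) \sim \frac{1}{\sqrt{\pi n}}\,e^{-k^2/n}\Bigl[1 + \tfrac{\kappa_3(D_n)}{6\sigma_n^3}H_3(k/\sigma_n)\Bigr], \qquad \sigma_n^2 = \tfrac{n-1}{2},\ H_3(y) = y^3-3y.$$
Setting $k=0$ proves (c): $\PP(D_n=0) \sim 1/\sqrt{\pi n} = 2c/\sqrt n$. For (b), only the odd Edgeworth term survives the subtraction, and a Riemann-sum calculation, using $\int_0^\infty \phi(y)H_3(y)\,dy = -1/\sqrt{2\pi}$, gives
$$\PP(D_n<0) - \PP(D_n>0) \sim \frac{\kappa_3(D_n)}{3\sigma_n^3\sqrt{2\pi}} = \frac{1}{2\sqrt{\pi n}} = \frac{c}{\sqrt n}.$$
The positive sign reflects the right-skewness of $D_n$: Alice wins rarely but by wide margins via long H-runs, while Bob wins often by small margins.

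For part (a), the asymptotic in (b) delivers strict inequality for all $n$ sufficiently large; I would settle the small cases $n=3,4,\ldots$ by direct enumeration ($n=3$ already gives Bob an edge of $\tfrac18$). The main obstacle will be the \emph{quantitative}, uniform form of the LCLT needed to bridge all $n\ge 3$: the bare asymptotic is not sufficient, and one must control both the Edgeworth remainder and the boundary term $\eta_n$ precisely enough to exclude any sign reversal at intermediate $n$. The reversal identity above is valuable here, as it turns the comparison into a cleaner combinatorial claim about $(X_1,X_n)$-conditioned probabilities, which is where the coupling ingredient of the title naturally enters.
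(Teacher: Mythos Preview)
The genuine gap is part~(a). You correctly flag that your Edgeworth asymptotic only delivers strict inequality for large $n$, and you propose enumeration for small $n$ together with a ``quantitative, uniform'' LCLT bound for the middle range --- but you supply neither an explicit threshold $n_0$ nor an effective remainder estimate, and for this renewal-reward sum (random $N_T$, boundary term $\eta_n$) such bounds are not off-the-shelf. The paper's proof of~(a) is entirely different and is the real payoff of the reversal/coupling in the title. One defines B-excursions (from $\H\T$ to the next return of the score to $0$, necessarily ending $\H\H$), A-excursions (from $\H\H$ to the next return, ending $\H\T$), and \Aa-excursions (an A-excursion followed by a possibly empty run of $\T$'s and a single $\H$). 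The key lemma is that time-reversal $\rho$ is a measure-preserving bijection between B-excursions and \Aa-excursions \emph{of the same length}. One then represents the coin sequence so that each excursion slot carries either $\alpha_i=\tau_i L_i\in\hat\sA$ or its reversal $\beta_i=\rho(\alpha_i)\in\sB$, each with probability $\tfrac12$, where $L_i=\T^{M_i-1}\H$ with $M_i\ge 1$. Bob is ahead throughout $\beta_i^\circ$ (of size $Q_i+M_i$), while Alice is ahead only throughout $\tau_i^\circ$ (of size $Q_i$), and $\tau_i^\circ\subsetneq\beta_i^\circ$. Hence
\[
\PP(S_n>0)-\PP(S_n<0)=\tfrac12\,\PP\bigl(I\ge 1,\ n\in\beta_I^\circ\setminus\tau_I^\circ\bigr)>0
\]
for every $n\ge 3$, uniformly and exactly --- no asymptotics, no case-checking. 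Your reversal identity $\#\H\T-\#\T\H=\mathbf{1}\{X_1=\H\}-\mathbf{1}\{X_n=\H\}$ is correct but far weaker; it does not by itself yield the pathwise domination that drives~(a).

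For~(b) and~(c), your Edgeworth route is a reasonable alternative to the paper's, which instead counts exactly the zero-score sequences ending $\H\T$ as a sum of products of binomial coefficients and then applies only de~Moivre/Stirling for the symmetric binomial. Two points you gloss over: $\eta_n$ is \emph{not} bounded (the trailing head-run can have any length, albeit with exponentially small probability), and the Edgeworth expansion you quote is stated for fixed-length iid sums, whereas here the number $N_T$ of summands is random; both issues can be handled by conditioning, but the paper's exact counting sidesteps them.
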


Explicit representations
for the probabilities in (b) and (c) are given in
equations \eqref{eq:3n}--\eqref{eq:13n}. 
Parts (b) and (c) imply that
\begin{equation}\label{eq:51}
\frac12 - \PP(\text{B wins}) \sim \frac{1}{4\sqrt{\pi n}}, \qq
\frac12 - \PP(\text{A wins}) \sim \frac{3}{4\sqrt{\pi n}}.
\end{equation}

It is immediate that Alice and Bob are equally likely to win when $n=1,2$. By considering
the eight possible outcomes when $n=3$, we have
$\PP(\text{Bob wins}) - \PP(\text{Alice wins}) = \frac18$ in this case. 

The methods of proof may be summarized as reversal, coupling, and renewal,
and the proof of Theorem \ref{thm:0} illustrates these standard techniques. Reversing 
a random sequence is a long-established activity which has contributed
enormously to probability  and especially to the theory of random walks
(see, for example, \cite[Sect.\ 3.10]{PRP}). The coupling  approach enables
us to study the \lq pathwise\rq\ relationship between winning sequences for Alice and 
for Bob, rather
than by simply calculating probabilities (see, for example, \cite[Sect.\ 4.12]{PRP}). 
Renewal theory is the study of random processes 
that renew themselves at random times (see for example, \cite[Chap.\ 10]{PRP}).

We make two intuitive remarks about the problem
(see also the next section). Let $S_n$ denote Bob's score  minus Alice's score after $n$
flips. 
\begin{letlist}
\item
Certainly, $S_n$ has mean $0$, but that does not imply that its distribution is symmetric, and it 
is precisely such  asymmetry that skews
the game towards Bob. One measure of asymmetry is termed \lq skewness' (denoted \lq skw'), and is defined in terms
of the so-called third central moment (see \cite[p.\ 163]{PRP}). The sign of the skewness (positive or negative) 
is an indicator of the more likely winner.
\item
The  term $\sqrt n$ of Theorem \ref{thm:0}
originates in the fact that $S_n$ is the sum of independent summands.
The number $S_n$ is random but it has order $n$. Intuition based on the central limit theorem suggests that $S_n$ has
an approximately Gaussian distribution with mean $0$ and variance of order $n$. In particular, one
may guess that $\PP(S_n=0)$ has order $1/\sqrt n$, in line with the so-called local central limit theorem
(see \cite[p.\ 219]{PRP}). As further evidence, the skewness of the sum of $n$ independent copies of a random variable $X$
is skw$(X)/\sqrt n$.
\end{letlist}

\section{Remarks on related problems}\label{sec:rem}

One may phrase the problem addressed here as  \lq$\H\T$ vs $\H\H$\rq, 
and we shall write \lq$\H\T > \H\H$\rq\ in paraphrase of Theorem \ref{thm:0}(a).
This implies that $\T\H > \H\H$ for the following reason. 
Take a sequence $\om$ of heads/tails and reverse it in time to obtain $\rho(\om)$ --- so that, for example,
the sequence $\om = \H\T\H\H\H\T$ (sometimes abbreviated in the natural way to $\H\T\H^3\T$)
becomes the reversed sequence $\rho(\om) = \T\H\H\H\T\H$ (abbreviated to $\T\H^3\T\H$).
Then $\om$ and $\rho(\om)$ have the same probability distribution, and the same count of consecutive 
head pairs $\H\H$. However, every $\H\T$ in $\om$ becomes $\T\H$ in $\rho(\om)$.
Such reversal is a technique used heavily in the current paper. Another simple technique is the interchange of 
heads and tails, whereby one deduces immediately that $\T\H > \T\T$.

Further questions spring easily to mind, involving sequences of length greater than $2$, and indeed 
\emph{sets} of sequences.
There is a general method for determining the likely winner for sufficiently large $n$, and it can be applied 
to more general problems than $\H\H$ vs $\H\T$. 
This is not the subject of the current article, but we outline it here for completeness.
The aggregate score process $S$ evolves as the number of flips increases, and it does so in the manner of
a stochastic process with finite-range dependence (that is, there exists $r < \oo$ such that
the increments in $S$ are independent when they are separated in time by $r$ or more). 
Such processes have many of the properties of random walk, and in particular they satisfy a central limit theorem (CLT),
in which the rate of convergence to the Gaussian distribution is determined (in the main) by the so-called
third central moment.  The last can be calculated explicitly in specific cases, and this yields the 
identity of the likely winner when $n$ is large. Some technicalities must be overcome to complete this argument,
and the nice arguments of Feller \cite[Sect.\ XVI.4]{F2} may be useful in this regard.
Mention is made of the preprint of Basdevant et al.\ \cite{bas}.
A full account of an approach of this type is expected in \cite{JNS}.

This CLT argument yields a general solution to such problems, but only for sufficiently large values of $n$.
Since it relies on asymptotics and error estimates, 
it cannot be expected to answer such questions for all values of $n$. 
In addition, it does not reveal detailed aspects of structure such as those described 
in the current work for the case $\H\T$ vs $\H\H$.
In a sense, Litt's question was a lucky shot (though fortune favours the well prepared) --- it was simple,
captivating, and has a beautiful answer. There is a limited number 
of other cases that are at least partially susceptible to the methods of the current article.

As mathematicians we insist on unambiguous definitions of the objects of our study. 
One may capture pretty well
all of probability theory by defining it
as the theory of a countably infinite sequence of tosses of a fair coin.  Perhaps not 
everything worth knowing is yet known about
this primeval experiment.

The target of the current note is to obtain Theorem \ref{thm:0} using exact probabilistic methods
to illuminate structure.
The associated literature is slightly complex, owing to the nature and speed of 
communication of $\XX$.
The idea of reversal has arisen independently in certain contributions on $\XX$ to Litt's post (see \cite{SR}, for example, for 
some nice ideas). 
Mention is made of \cite{EZ}, which uses computational tools to explore the quantities in Theorem \ref{thm:0},
 and \cite{Seg}, which  obtains
the theorem by applying analytic tools to generating functions.

\section{Notation and basic observations}\label{sec:not}

Here is some notation. Abbreviate Alice to A, and Bob to B; write $\H$ for a head and $\T$ for a tail.
Let 
$\NN$ denote the natural numbers, and let $\Om_\oo=\{\H,\T\}^\NN$ 
be the
set of all sequences $\om=(\om_1,\om_2,\dots)$
each element of which is either $\H$ or $\T$.  
We shall normally express such sequences as $\om=\om_1\om_2\cdots$.
Similarly, let $\Om_n:= \{\H,\T\}^n$,
the set of all sequences of $n$ coin tosses. 

Let $\om\in\Om_\oo$.
Player A scores $-1$ at each appearance of 
$\H\H$; player B scores $1$ at each appearance of $\H\T$.
The score $S_k(\om)$ is the aggregate score after 
$k$ steps of $\om$, that is, $S_k(\om)$ is the number
of appearances of $\H\T$ minus the number of appearances of $\H\H$ in 
the subsequence $\om_1\om_2\cdots\om_k$.

There follows an outline of the method to be followed here, with an illustration in Figure \ref{fig:1}. 
First, one defines epochs of renewal, 
at which the scoring process restarts. Neither player scores until the first appearance of $\H$. 
\begin{romlist}
\item If the following flip is $\T$ then Bob enters a winning period (of some length $l_\B$), 
which lasts until the next time that the aggregate score is $0$;
this must happen at an appearance of $\H\H$.  

\item If the following flip is $\H$ then Alice enters a winning period (of some length $l_\A$),
which lasts until the next time that the aggregate score is $0$;
this must happen at an appearance of $\H\T$. 
\end{romlist}
The process then restarts (subject to 
certain details to be made specific). After $n$ coin flips, Alice is winning if she is then in a winning period, 
and similarly for Bob.  Ties occur between winning periods.
We will see that the representative periods $l_\A$, $l_\B$ are such that
$l_\A$ is (stochastically) smaller than $l_\B$ (that is to say, $\PP(l_\A > l)\le \PP(l_\B>l)$ for all $l$), 
and Theorem \ref{thm:0}(a) will follow.

\begin{figure}
\begin{equation*}
\begin{array}{r c c  c c c c c c c  c c c c  c c c c c c c}
k: &1&2&3&4&5&6&7&8&9&10&11&12&13&14&15&16&17&18&19&20\\
\om_k:&\T&\H&\T&\H&\T&\H&\H&\H&\H&\T&\T&\H&\T&\T&\H&\T&\H&\H&\H&\T\\
S_k:  &0&&1&&2&&1&0&-1&0&&&1&&&2&&1&0&-1\\
&&&\rlap{$\xleftrightarrow[\textstyle l_\B]{\phantom{xxxxxxxxxxxxxxxxxx}}$}&&&&&&\rlap{$\xleftrightarrow[\textstyle l_\A]{\phantom{xxxx}}$} &&&&\rlap{$\xleftrightarrow{\phantom{xxxxxxxxxxxxxxxxxxxxxxxxxx}}{}$}\\
\end{array}
\end{equation*}
\caption{A sequence $\om=\T\H\T\H\T\H^4\T^2\H\T^2\H\T\H^3\T$
of 20 coin flips. The scores $S_k$ are stated only when they change. 
Bob enters a winning period from epoch $3$ to 
epoch $8$, then Alice from $9$ to $10$, and then Bob from $13$ to $19$. }\label{fig:1}
\end{figure}

The stochastic domination is proved by
displaying a concrete coupling of $l_\A$ and $l_\B$. 
The above explanation is given in more detail in Section \ref{sec:3}.

The asymptotic part (b) of Theorem \ref{thm:0} is of course connected to the local
central limit theorem (see, for example, \cite[p.\ 219]{PRP}). It suffices to use the earliest such theorem ever proved, namely the 1733 theorem of de Moivre
\cite{deM1,PdMA} (see also \cite[p.\ 243--254]{deM} and \cite[Thm 1.1]{SW}), 
although one may equally use the (de Moivre--)Stirling formula alone 
(see the historical note \cite{KP}).

We have assumed implicitly in the above that the lengths $l_\A$, $l_\B$ 
are finite (almost surely), and this
requires proof. It follows from the next lemma.

Let $\PP$ be the probability measure on $\Om_\oo$ under which the coin tosses 
are independent random variables,
each being $\H$ (\resp, $\T$) with probability $\frac12$. 
An event $E$ is said to occur \emph{almost surely} if
$\PP(E)=1$.

\begin{lemma}\label{lem:2}
The number of times $r$ at which the aggregate score satisfies
$S_r(\om)=0$ is infinite almost surely.
\end{lemma}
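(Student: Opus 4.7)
The plan is to embed into $(S_r)$ a one-dimensional mean-zero random walk on $\ZZ$ and to invoke its recurrence in order to conclude that $S_r=0$ infinitely often almost surely.

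To set up the embedding, decompose each $\om\in\Om_\oo$ into alternating runs of tails and heads, writing $\om = \T^{a_0}\H^{b_1}\T^{a_1}\H^{b_2}\T^{a_2}\cdots$ with $a_0\ge 0$ and $a_i,b_i\ge 1$ for $i\ge 1$. Under $\PP$, the $b_i$ (and the $a_i$ for $i\ge 1$) are independent geometric random variables on $\{1,2,\dots\}$ with parameter $\frac12$, so $\EE(b_i)=2$ and the variance of $b_i$ is finite. A short bookkeeping argument shows that the $j$th $\H$-run contributes exactly $b_j-1$ occurrences of $\H\H$, and the transition from that run into the following $\T$ contributes exactly one $\H\T$; hence its net contribution to the score is $2-b_j$. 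If $\tau_j$ denotes the position of the first $\T$ after the $j$th $\H$-run, it follows that $S_{\tau_j}=W_j:=\sum_{i=1}^{j}(2-b_i)$.

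Now $(W_j)_{j\ge 0}$ is a random walk on $\ZZ$ whose increments are i.i.d.\ with mean $0$ and finite variance, and is therefore recurrent by a classical theorem. Consequently $W_j=0$ for infinitely many $j$ almost surely, which yields infinitely many distinct times $\tau_j$ at which $S_{\tau_j}=0$, completing the proof.

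The only real obstacle is the careful identification of the embedded walk, i.e.\ the bookkeeping of $\H\H$ and $\H\T$ contributions of each $\H$-run; once this is in place, recurrence is entirely standard. If one prefers to sidestep the random-walk recurrence theorem, the fact that every increment $2-b_j$ is bounded above by $1$ and has mean $0$ already forces $\liminf_j W_j=-\oo$ and $\limsup_j W_j=+\oo$ almost surely, and hence infinitely many zero crossings of $(W_j)$.
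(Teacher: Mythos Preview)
Your proof is correct and follows essentially the same route as the paper: both embed a mean-zero integer-valued random walk into $(S_r)$ and invoke recurrence (Chung--Fuchs, or finite variance) to obtain infinitely many visits to $0$. The only cosmetic difference is the choice of sampling times---the paper samples the score at every appearance of $\T$ (so the increment is $(2-R)\,1[R\ge1]$ with $R$ the intervening head count, possibly $0$), whereas you sample only at the first $\T$ after each head-run, which simply removes the zero increments; the resulting walks are the same up to a time change.
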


\begin{remark}
Many of the arguments of this note are valid in the more general setting where heads occurs with some probability $p\in(0,1)$. However, the conclusion of Lemma \ref{lem:2} is 
false when $p\ne \frac12$, and indeed
$\PP(\text{\rm A wins})$ approaches $1$ if $p>\frac12$, and approaches $0$ if $p<\frac12$,
in the limit as the number of coin flips grows to $\oo$.
\end{remark}

\begin{proof}[Proof of Lemma \ref{lem:2}]
There are several ways to prove this, of which we feature the neat argument of \cite{SR}.
Let $Z_1,Z_2,\dots$ be independent random variables taking the values $\pm 1$ each with probability $\frac12$.
The partial sums $T_n := Z_1+Z_2+\dots+ Z_n$ are said to form a \emph{simple random walk} (SRW).
It is a famous theorem that
$$
\PP(T_n=0 \text{ for some $n\ge 1$})=1
$$ 
(see \cite[Cor.\ 5.3.4]{PRP}), and this is usually expressed by saying that the value $0$ is \lq recurrent\rq.
Having returned once, the walk will (with probability $1$) return again, and so on. Therefore,
\begin{equation}\label{eq:2+}
\PP(T_n=0 \text{ for infinitely many $n$})=1.
\end{equation}

The score process $(S_r)$ is not a SRW, but instead a \lq time-changed' SRW. We view it as follows.
First wait until the first $\H$. Exactly one of the two following events then occurs, each having probability $\frac12$,
\begin{equation*}
\begin{aligned}
\text{(tail):} &\text{ the next flip is $\T$ and $S$ increases by $1$,}\\
\text{(head):} &\text{ the next flip is $\H$ and $S$ decreases by $1$.}\\
\end{aligned}
\end{equation*}
In case (tail), we wait for the next $\H$; in either case, the process is then iterated.  
Let $Z_i$ be the $i$th change in value of the 
score process. By the above, the $Z_i$ are the steps of a SRW, albeit delayed in time. Every return to $0$ of
this SRW marks a return to $0$ of the score, and the claim follows by \eqref{eq:2+}.
\end{proof}

\section{Proof of Theorem \ref{thm:0}($\mathrm{a}$)}\label{sec:3}

A finite sequence $\om=\om_1\cdots\om_k$ is called 
\begin{align*}
&\text{a B-\emph{excursion} if it starts 
$\H\T$ and ends $\H\H$, and satisfies:}\\
&\hskip3cm \text{$S_l(\om)>0$ for $1<l<k$, and $S_k(\om)=0$,}\\
&\text{an A-\emph{excursion} if it starts 
$\H\H$ and ends $\H\T$, and satisfies:}\\
&\hskip3cm \text{$S_l(\om)<0$ for $1<l<k$, and $S_k(\om)=0$,}\\
&\text{an \Aa-\emph{excursion} if it comprises an A-excursion followed by a (possibly empty) }\\
&\hskip3cm\text{run of tails and then a single head. }
\end{align*}
We write $\sB$ for the set of B-excursions, and $\sB_k$ for those of length $k$,
with similar notation for A-excursions and \Aa-excursions (using
the notation $\sA$ and $\hat \sA$, \resp).

\Aa-excursions are introduced for the following reason. 
Players A and B can score only at a coin flip that follows an appearance of $\H$. 
It will be convenient that excursions finish with a head.
Since an A-excursion ends with a tail, we simply extend it
to include any following run of tails, followed by the subsequent head; this extension contains neither $\H\H$ nor $\H\T$ and hence the score of the excursion is unchanged.

\begin{remark}\label{rem:2}
A \emph{headrun} (\resp, \emph{tailrun}) is a maximal consecutive sequence of heads
(\resp, tails). Any finite sequence $\om$ comprises alternating headruns and tailruns.
Alice's score equals the number of heads minus the number of headruns. Bob's score
equals the number of tailruns having a preceding head. Let $h$ be the total number
of heads, and $r$ the total number of runs (each comprising either heads or tails).
The aggregate score $S(\om)$
equals $r-h$ if $\om_1=\H$ (\resp, $r-1-h$ if $\om_1=\T$). 
\end{remark}

\begin{comment}
\begin{figure}
\begin{alignat*}{2}
\text{Case (b): } &\T^{M-1}\red{\H}\, \vert\, \red{\T\cdots\cdots} \hfill &&\red{\llap{\H}\H}\,\vert\\
\text{Case (c): } &\T^{M-1}\red{\H}\, \vert\, \red{\H\cdots\cdots \H\T}\,\vert\,
\red{\T^{M'-1}}&&\red{\H}\,\vert.\\
\phantom{\text{Case (c): }}&\phantom{\T^{M-1}}M\phantom{\,\vert\, \H\cdots\H}{M+Q}
\hskip6pt M+Q+M'.
\end{alignat*}
\caption{}\label{fig:1}
\end{figure}
\end{comment}

We define sequence-reversal next. 
Let $\Phi=\bigcup_{k=1}^\oo \Om_k$ be the set of all non-empty finite sequences,
and let $\Phi^{\H}$ be the subset of $\Phi$ containing
all finite sequences that begin $\H$.
For $\om\in\Phi$, we write $S(\om)$ for 
the aggregate score of $\om$. The score function is additive in the sense that
\begin{equation}\label{eq:2n}
S(\om_1\cdots\om_{m+n}) = S(\om_1\cdots\om_{m})+S(\om_m\cdots\om_{m+n}),
\qq m,n \ge 1.
\end{equation}
For a finite sequence
$\om= \om_1\om_2\dots \om_k\in\Phi$,
define its \emph{reversal} $\rho(\om)$ by $\rho(\om)= \om_k\om_{k-1}\dots \om_1$.

 \begin{lemma}\label{lem:1}
 \mbox{\hfil}
 \begin{letlist}
 \item If $\om=\om_1\cdots\om_k\in\Phi^\H$ ends in $\H$ (so that $\om_1=\om_k=\H$), 
 then $S(\om)=S(\rho(\om))$.
 \item Let $k\ge 2$.  The mapping $\rho$ is a bijection between the sets $\sB_k$ and
 $\hat \sA_k$.
 \end{letlist}
 \end{lemma}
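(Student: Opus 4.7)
Plan. For part (a), I would invoke the formula $S(\om)=r-h$ from Remark~\ref{rem:2}, which applies to any sequence starting with $\H$ ($r$ counts the total runs and $h$ counts the heads). Both $r$ and $h$ are manifestly invariant under reversal, and the hypothesis $\om_1=\om_k=\H$ ensures that $\rho(\om)$ too begins (and ends) with $\H$, so the formula applies equally to both sequences and yields $S(\om)=S(\rho(\om))$.

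For (b), the measure-preserving aspect is immediate: $\PP$ assigns weight $2^{-k}$ to each element of $\Om_k$ and $\rho$ is a bijection of $\Om_k$ onto itself, hence measure-preserving on any subset. Since $\rho$ is involutive, the bijection statement reduces to the inclusion $\rho(\sB_k)\subseteq\hat\sA_k$; the reverse inclusion $\rho(\hat\sA_k)\subseteq\sB_k$ then follows by a symmetric argument. Fix $\om\in\sB_k$ and set $\om'=\rho(\om)$. Since $\om$ ends $\H\H$, the word $\om'$ begins $\H\H$, and part (a) gives $S_k(\om')=0$. Let $m>1$ be the least index with $S_m(\om')=0$ (which exists because $S_k(\om')=0$). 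Because $S_2(\om')=-1$ and score jumps lie in $\{-1,0,+1\}$, we have $S_l(\om')<0$ for $1<l<m$, and the pair $\om'_{m-1}\om'_m$ (the jump from $-1$ to $0$) must be $\H\T$. Hence $\om'_1\cdots\om'_m$ is an A-excursion.

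The crux is to show that $\om'_{m+1},\ldots,\om'_{k-1}$ are all $\T$, while $\om'_k=\om_1=\H$, so that the tail is $\T^{k-m-1}\H$. I plan to prove the stronger statement $S_l(\om')=0$ for every $l\in[m,k]$: once available, each pair $\om'_{l-1}\om'_l$ with $l\in(m,k]$ contributes $0$ to the score, hence equals $\T\H$ or $\T\T$, forcing $\om'_{l-1}=\T$. The tool is a score-reversal identity obtained from the additivity \eqref{eq:2n} together with part (a) (with a $+1$ correction in the mixed-boundary case, read off from the variant $S=r-1-h$ in Remark~\ref{rem:2} for sequences beginning with $\T$):
\begin{equation*}
S_l(\om')\;=\;\begin{cases}-S_{k-l+1}(\om) & \text{if } \om_{k-l+1}=\H,\\[2pt] 1-S_{k-l+1}(\om) & \text{if } \om_{k-l+1}=\T.\end{cases}
\end{equation*}
The B-excursion bound $S_j(\om)\ge 1$ for $1<j<k$ then forces $S_l(\om')\le 0$ on $[1,k]$. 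If some $l_0\in(m,k)$ were to violate $S_{l_0}(\om')=0$, taking $l_0$ minimal would give $S_{l_0-1}(\om')=0$ and $S_{l_0}(\om')=-1$, so $\om'_{l_0-1}\om'_{l_0}=\H\H$; the $\H$-case of the identity then yields $S_{k-l_0+2}(\om)=0$, contradicting the B-excursion property at the interior index $k-l_0+2\in(1,k)$.

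The main obstacle I anticipate is the $\T/\H$ boundary case of the score-reversal identity, which is not supplied directly by part (a). It requires a short auxiliary calculation -- either invoking the $S=r-1-h$ formula of Remark~\ref{rem:2} or a direct bookkeeping of how the score shifts by $1$ when reversal moves the starting letter from $\T$ to $\H$. Once that identity is in place, the rest of (b) collapses to the single contradiction sketched above.
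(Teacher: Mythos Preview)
Your proof is correct. Part (a) is identical to the paper's. For part (b) you take a genuinely different route from the paper, and it is worth recording the contrast.

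The paper exploits the explicit form of a B-excursion: any $b\in\sB_k$ begins $\H\T^r\H\cdots$ for some $r\ge 1$, so $\rho(b)$ automatically ends $\cdots\H\T^r\H$. Thus the tailrun-plus-head part of the putative \Aa-excursion is obtained for free from the known start of $b$, and the only work is to check that the prefix $b_kb_{k-1}\cdots b_{r+2}$ of $\rho(b)$ is an A-excursion. For that interior-score inequality the paper argues directly: if $S(b_k\cdots b_j)=0$ at some interior $j$, slide to the nearest $\H$ at position $s\le j$, apply part (a) to the $\H$-ended block $b_s\cdots b_k$, and use additivity \eqref{eq:2n} to force $S_s(b)=0$, contradicting $b\in\sB_k$.

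You instead work entirely from score information: locate the first zero $m$ of $S_l(\om')$, show the prefix is an A-excursion, and then establish the tail structure by proving $S_l(\om')=0$ on $[m,k]$ via your two-case reversal identity. The price is the mixed-boundary ($\T$/$\H$) case of that identity, which you correctly derive from the $r-1-h$ variant of Remark~\ref{rem:2}; once that is in hand your contradiction (an interior $\H\H$ forcing $S_{k-l_0+2}(\om)=0$) is clean. The paper's argument is shorter because it never needs the $\T$-case identity at all---the structural observation about $\H\T^r$ replaces it---whereas your argument is more uniform and would adapt without change to settings where the initial block of a B-excursion is less explicit.
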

 
\begin{proof}
(a) Let $\om\in\Phi^\H$ end in $\H$, so that $\rho(\om)\in \Phi^\H$.
By Remark \ref{rem:2}, $S(\om)$ equals the number of runs minus the number of heads. These counts are invariant under $\rho$.
 
 (b) A B-excursion $b=b_1b_2\cdots b_k\in \sB_k$ has the form $b=\H\T^r\H\cdots \H\H$ for some $r\ge 1$, 
 whence $\rho(b)=b_kb_{k-1}\cdots b_1$
 has the form $\H\H\cdots\H\T^r\H$. 
By part (a), $S(\rho(b))=S(b)=0$, which implies, after removal of the final $\T^{r-1}\H$, that
\begin{equation}\label{eq:21n}
S(b_kb_{k-1}\cdots b_{r+1})=0. 
\end{equation}

 We claim that 
 \begin{equation}\label{eq:20n}
 S(b_kb_{k-1}\cdots b_j) <0 \qq\text{for  } j \in [r+2,k-1].
 \end{equation}
Suppose, on the contrary, that $j\in[r+2,k-1]$ is such that 
\begin{equation}\label{eq:200}
S(b_kb_{k-1}\cdots b_j) =0,
\end{equation}
and pick $j$ largest with this property. Since $b_kb_{k-1}\cdots b_j$ starts with $\H\H$, it must end with
$\H\T$. We now extend it to the next appearance in $\rho(b)$ of $\H$, thus obtaining 
$b_kb_{k-1}\cdots b_j \T^a\H$ (= $b_k b_{k-1}\cdots b_{j-a-1}$)
for some $a\ge 0$.
By \eqref{eq:200}, 
\begin{equation}
S(b_k b_{k-1}\cdots b_{j-a-1})= S(b_kb_{k-1}\cdots b_j \T^a\H)=0.
\end{equation}
 By part (a), we have 
 $$
 S(b_{j-a-1}\cdots b_{k-1}b_k) = S(b_kb_{k-1}\cdots b_{j-a-1})=0.
 $$ 
 By the additivity of $S$, \eqref{eq:2n}, 
 $$
 S(b_1b_2\cdots b_{j-a-1}) = S(b_1b_2\cdots b_k) - S(b_{j-a-1}\cdots b_{k-1}b_k) = 0,
 $$
 in contradiction of the assumption that $b$ is a B-excursion, and \eqref{eq:20n} follows.  
 
We illustrate the above argument with an example. Suppose $k=8$ and $b=\H\T^2\H\T\H^3$, so that $\rho(b)=
\H^3\T\H\T^2\H$. Note that $b_kb_{k-1}\cdots b_{r+1}=\H^3\T\H\T$ in agreement with \eqref{eq:21n}.
Equation \eqref{eq:20n} amounts to the assertion that $S(\H^3\T\H), S(\H^3\T), S(\H^3), S(\H^2) < 0$.

We return to the proof of the lemma.
By \eqref{eq:21n}--\eqref{eq:20n}, $b_kb_{k-1}\cdots b_{r+1}$ is an A-excursion, and hence
$\rho(b)$ is an \Aa-excursion.
Therefore, $\rho$ is  an injection.
By the same argument applied to $\rho^{-1}$, we have that $\rho$ is a surjection.
\end{proof}
 
\begin{figure}
\begin{alignat*}{2}
&\rlap{$\phantom{\xleftrightarrow[\textstyle M]{\phantom{xxxxxxxxxxxxx}}}
\xleftrightarrow[\phantom{xxxxxxxiixxxxxxxxxxxxixxxxxxx}]{\textstyle N}$}
%\xleftrightarrow[\textstyle M']{\phantom{xxxxxx}}$}
\\
\noalign{\vskip-12pt}
&\begin{array}{cccc}\T&\T&\T&\H\end{array}  
\Bigl\{
&&\begin{array}{c c  c c c c c }
\T&\T&\H&\T&\H&\H&\H\\
\H&\H&\T&\H&\T&\T&\H
\end{array}
\Bigr\}\cdots\\
\noalign{\vskip-10pt}
&\rlap{\,$\xleftrightarrow[\textstyle M]{\phantom{xxxxxxxxxxxxx}} 
\xleftrightarrow[\textstyle Q]{\phantom{xxxxxxxyxxxxxxxxxxxxx}}
\xleftrightarrow[\textstyle M']{\phantom{xxxxxx}}$}
\end{alignat*}

\caption{An illustration of the notation $M$, $N$, $Q$. Within the braces, 
the upper sequence is a B-excursion, and the lower is an \Aa-excursion.}\label{fig:2}
\end{figure}
 
 We make three observations, in amplification of the remarks of the last section.
Let $\om\in\Om_\oo$, and observe the initial evolution of scores, as illustrated in Figure \ref{fig:2}. 

\begin{letlist}
\item There is no score until the first $\H$ appears. Let $M$ be the position of the first
$\H$, and note that 
\begin{equation}\label{eq:3+}
\PP(M=m) = (\tfrac12)^m, \qq m\ge 1.
\end{equation}
That is, there is an initial 
sequence of tails of some random length $M-1$ ($\ge 0$), followed by a head 
(so that $\om=\T^{M-1}\H\cdots$). 
Following this head, there is equal probability of $\H$ or $\T$.
\item If the next toss is $\T$ (which is to say that $\om_{M+1}=\T$), then Bob scores one point.
He remains in the lead until the next epoch, $M+N$ say, at which the aggregate score equals $0$.
It must be the case that $\om_{M+N-1}\om_{M+N} = \H\H$,
and thus the sequence $\om_M\cdots\om_{M+N}$ is a B-excursion. 
The process restarts from $\H$ at time $M+N$.

\item  Suppose the next toss is $\H$
(that is, $\om_{M+1}=\H$). The situation is slightly more complicated in this case.
At time $M+1$, Alice leads by $1$, and she continues in the lead until
the next epoch, $M+Q$ say, when the aggregate score equals $0$, and moreover 
$\om_{M+Q-1}\om_{M+Q} = \H\T$. Thus the sequence $\om_1\cdots\om_{M+Q}$ is an 
A-excursion.
Whereas in (b) the process restarts from the state 
$\om_{M+N}=\H$,
this time we have $\om_{M+Q}=\T$, and we wait a random period of time for the next $\H$. 
As in (a), there is a run of tails before
the next head, which is to say that the process restarts from the head at epoch $M+Q+M'$ where
$M'$ is an independent copy of $M$. The sequence $\om_M\cdots\om_{M+N+M'}$ is an
\Aa-excursion. 
\end{letlist}

We shall adjoin certain sequences by placing them in tandem, and we write either $\psi_1\psi_2$
or $\psi_1\cdot\psi_2$
for $\psi_1$ followed by $\psi_2$. When scoring $\psi_2$, viewed
as a subsequence of $\psi_1\psi_2$, one  
takes account of the final character of $\psi_1$, which will typically be $\H$ in the cases studied here. 
In order to do the necessary book-keeping we introduce the following notation: 
for a sequence $\psi$ beginning with $\H$,
we write $\H^{-1}\psi$ for the sequence obtained from $\psi$
by removing its initial $\H$.

We construct next a random sequence of excursions drawn from the set $\sA$ of A-excursions.
Consider a random sequence of flips beginning $\H\H$, and let $\tau$ be the initial A-excursion of this sequence;
we term $\tau$ a \emph{random A-excursion}; a similar definition holds for a random B-excursion.
Let $(\tau_i: i\ge 1)$ be a 
sequence of independent copies of $\tau$ (that is, they are independent and each has the same distribution as $\tau$).
Let  $Q_i+1$ be the length of $\tau_i$ (so that $\H^{-1}\tau_i$
has length $Q_i$).
The $\tau_i$ are almost surely finite, by Lemma \ref{lem:2}.
Let $(M_i:i\ge 0)$ be independent  copies of $M$ (see \eqref{eq:3+}), also independent of the $Q_i$.
Let $N_i=Q_i+M_i$, and let $L_i$ be the sequence $\T^{M_i-1}\H$.

We have that $\tau_i L_i$ is a random element of $\hat\sA$ of length $Q_i+M_i+1$.
By Lemma \ref{lem:1}, $\rho(\tau_i L_i)$ is a random element of $\sB$ of length $Q_i+M_i+1$.
We define
\begin{equation}\label{eq:5+}
\begin{aligned}
&\text{the \Aa-excursion $\alpha_i:=\tau_i L_i$},\\
&\text{the B-excursion $\beta_i:=\rho(\tau_i L_i)$}.
\end{aligned}
\end{equation}
Thus $\beta_i$ is simply a reversal of $\alpha_i$, and this fact will provide a coupling of 
\Aa-excursions and B-excursions which is length-conserving (in that $\alpha_i$ and
$\beta_i$ have the same length).

Both $\alpha_i$ and $\beta_i$ start and end with $\H$; when placing them in tandem
we shall strip the initial $\H$.
Another way of expressing (a)--(c) is as follows. 
\begin{romlist}
\item  A random sequence of coin tosses begins with $L_0$.

\item This is followed by 
\begin{equation*}
\gamma_1 := \begin{cases}
\H^{-1}\alpha_1 &\text{with probability $\frac12$},\\
\H^{-1}\beta_1 &\text{with probability $\frac12$}.
\end{cases}
\end{equation*}

\item Let $k\ge 2$, and suppose $\alpha_i$, $\beta_i$ have been constructed for
$i=1,2,\dots,k$. We then let
  \begin{equation}
\gamma_{k+1} := \begin{cases}
\H^{-1}\alpha_{k+1} &\text{with probability $\frac12$},\\
\H^{-1}\beta_{k+1} &\text{with probability $\frac12$}.
\end{cases}
\label{eq:8n}
\end{equation}
\end{romlist}
Note that the length of $\gamma_i$ is $Q_i+M_i$.

\begin{lemma}\label{lem:3}
The sequence $X=L_0\gamma_1\gamma_2\cdots$ is an independent 
sequence of fair coin tosses.
\end{lemma}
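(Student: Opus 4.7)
My approach is a regeneration argument: parse a hypothetical i.i.d.\ fair coin sequence $Y$ canonically into independent blocks, and check that the blocks of $X$ have the same joint distribution.

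First, following the description (a)--(c) of Section \ref{sec:3}, I would decompose $Y$ as $Y = L'_0\cdot G'_1\cdot G'_2\cdots$, where $L'_0 = \T^{M'-1}\H$ with $M'$ geometric of parameter $\tfrac12$, and where each subsequent $G'_i$ is either $\H^{-1}\beta'$ (for the ensuing B-excursion $\beta'$) or $\H^{-1}\alpha'$ (for the ensuing \Aa-excursion $\alpha'$), according to whether the flip immediately after the previous terminating head is $\T$ or $\H$. By Lemma~\ref{lem:2} this parsing is almost surely well-defined, with infinitely many finite blocks. The strong Markov property applied at each renewal epoch (a terminating $\H$) then yields that $L'_0$ and $(G'_i)_{i\ge1}$ are mutually independent and the $G'_i$ are i.i.d. A direct counting argument identifies the law of a single $G'_i$: with probability $\tfrac12$ it is $\H^{-1}\beta'$ with $\PP(\beta'=b)=2^{-(|b|-2)}$ on $b\in\sB$; with probability $\tfrac12$ it is $\H^{-1}\alpha'$ with $\PP(\alpha'=a)=2^{-(|a|-2)}$ on $a\in\hat\sA$ (both sums over $\sB$ and $\hat\sA$ equalling $1$ by Lemma~\ref{lem:2}).

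Next I would verify that the constructed $X$ has the same joint distribution. The identity $L_0\stackrel{d}{=}L'_0$ is immediate from $M_0\sim\text{Geometric}(\tfrac12)$. For the $\gamma_i$, note that $\alpha_i=\tau_i L_i$ where the $\tau_i$ are drawn independently from the canonical law $\PP(\tau_i=t)=2^{-(|t|-2)}$ on $\sA$ and $L_i=\T^{M_i-1}\H$ is independent; multiplying gives $\PP(\alpha_i=tL)=2^{-(|tL|-2)}$, matching the canonical law of $\alpha'$. By Lemma~\ref{lem:1}(b), $\rho:\sB_k\to\hat\sA_k$ is a measure-preserving bijection, so $\beta_i=\rho(\alpha_i)$ inherits the canonical $\sB$-law of $\beta'$. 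The fair coin in \eqref{eq:8n} then produces the same mixture on $(\H^{-1}\alpha_i,\H^{-1}\beta_i)$ as $G'_i$, and the mutual independence of the collection $(M_0,\tau_i,M_i,\text{choice coin}_i:i\ge 1)$ delivers the independence of $L_0$ and $(\gamma_i)_{i\ge 1}$. Hence $X\stackrel{d}{=}Y$, which is what the lemma asserts.

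The main obstacle is the book-keeping at block boundaries. Every $\alpha_i$ and $\beta_i$ both starts and ends with $\H$, and the terminal $\H$ of one block is identified with the initial $\H$ of the parent ($\alpha_{i+1}$ or $\beta_{i+1}$) of the next block; this is the role of the $\H^{-1}$-stripping. One must verify that the identification neither double-counts nor omits a coin flip, and that the binary choice between $\H^{-1}\alpha_{i+1}$ and $\H^{-1}\beta_{i+1}$ correctly encodes the i.i.d.\ coin flip that immediately follows the previous terminating $\H$. This is so because $\H^{-1}\alpha_{i+1}$ begins with $\H$ (since $\alpha_{i+1}$ begins $\H\H$) whereas $\H^{-1}\beta_{i+1}$ begins with $\T$ (since $\alpha_{i+1}$ ends in $\T\H$, so $\beta_{i+1}=\rho(\alpha_{i+1})$ begins $\H\T$). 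With this sanity check in place, everything else reduces to a careful application of the strong Markov property at successive renewal epochs, together with Lemma~\ref{lem:1}(b).
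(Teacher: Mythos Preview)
Your proposal is correct and is exactly the argument the paper has in mind; the paper itself offers no formal proof, writing only that the lemma ``does not require proof beyond the above remarks'', and your regeneration argument is precisely the elaboration of those remarks (decompose a genuine i.i.d.\ sequence as in (a)--(c), match block laws via Lemma~\ref{lem:1}(b), and use the strong Markov property at the terminating heads). The only point worth flagging is that the paper never explicitly writes down the law $\PP(\tau_i=t)=2^{-(|t|-2)}$ on $\sA$; you are right to take this as implicit in the description of item~(c).
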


This does not require proof beyond the above remarks.  
The lemma provides a representation of a random sequence in terms of an initial tailrun, followed by
independent copies of $\gamma_1$ in tandem. Each $\gamma_i$ occupies a time-slot, and within this
slot there appears a sequence of coin-tosses; according to the flip of another fair coin, we either retain this
sequence or we reverse it (see \eqref{eq:8n}).   
(Some minor details concerning initial and final heads are omitted from this overview).

The sequence $X$ may be expressed as follows:
\begin{equation}\label{eq:4+}
X = L_0 \cdot\left\{
\begin{matrix} \H^{-1}\alpha_1 = \H^{-1}\tau_1L_1\\ \H^{-1} \beta_1 =\H^{-1}\rho(\tau_1L_1)\end{matrix} 
\right\} \cdot
\left\{
\begin{matrix}\H^{-1}\alpha_2 = \H^{-1}\tau_2L_2\\ \H^{-1} \beta_2 =\H^{-1} \rho(\tau_2 L_2) \end{matrix}
\right\}\cdots.
\end{equation}
There are two possibilities within each matched pair of braces, and one chooses between them at random. 
More precisely, let $Z_1,Z_2,\dots$ be 
independent random variables taking values in $\{a,b\}$, with each value chosen with probability $\frac12$. 
We pick the upper
element of the $i$th matched pair if $Z_i=a$ and the lower element if $Z_i=b$.

Let $X=(X_k:k\ge 1)$ be as in Lemma \ref{lem:3}, so that $X$ may be expressed in the form of \eqref{eq:4+}.
We shall explore the differences between the upper and lower elements of
any given pair of braces there. Consider the $i$th matched pair of braces in \eqref{eq:4+}.
Let $f_i$ be the first index in this subsequence of $X$, and let $s_i$ be the last. Finally, let
$m_i= f_i+|\H^{-1}\tau_i|-1$ where $|\om|$ denotes the length of a sequence $\om$; that is, $m_i$ is 
the index of the last element of $\H^{-1}\tau_i$ when $Z_i=a$. In illustration of these three indices, we
assume that $Z_i=a$ and express the $i$th matched pair in the form:
\begin{equation}\label{eq:6+}
\left\{\H^{-1}\tau_iL_i\right\} = (\underset{\downarrow f_i}{t_2}t_3\dots \underset{\downarrow m_i}{t_c})
\cdot(l_1l_2\dots \underset{\downarrow s_i}{l_d})
\end{equation}
where $\tau_i= \H t_2\dots t_c$ and $L_i=l_1l_2\dots l_d$. Then $f_i$ is the index of $t_2$ in $X$, $m_i$
is the index of $t_c$, and $s_i$ is the index of $l_d$ (\lq$f$' for \emph{first} point, \lq$m$' for 
\emph{middle}, and \lq$s$' for \emph{second}).
 Evidently, $m_i < s_i$. 

Here is a concrete example.
If $\tau_i = \H^3\T\H^2\T\H\T$ and $L_i = \T^3\H$, then the $i$th pair of braces contains
$\H^{-1}\tau_iL_i = \H^2\T\H^2\T\H\T \cdot \T^3\H$, so that 
$m_i=f_i+ 7$ and $s_i=m_i + 4$, where $f_i$ is the index in $X$
of the first element of this brace.

\begin{proof}[Proof of Theorem \ref{thm:0}(a)]
Let $n\ge 3$, and find the unique $I=I_n$ such that $n$ lies in the $I$th brace pair of \eqref{eq:4+} (write $I=0$ if
no such pair exists). Note that $f_I\le n\le s_I$ for $I\ge 1$. 
If $n$ lies inside a B-excursion (\resp, A-excursion), then Bob (\resp, Alice) is currently winning.
The third possibility is that the $I$th brace contains an \Aa-excursion and $n$ falls at
or after the end of the A-excursion therein.
More precisely, the score at epoch $n$ satisfies
\begin{equation}
\label{eq:40}
\begin{aligned}
\text{$S_n=0$}&\text{\q if\q}&&I=0,\\
\text{$S_n>0$}&\text{\q if\q}&&I\ge 1,\, Z_I=b, \text{ and } n< s_I\,\\
\text{$S_n<0$}&\text{\q if\q}&&I\ge 1,\,  Z_I=a, \text{ and } n < m_I,\\
\text{$S_n=0$}&\text{\q if\q}&&\text{none of the above conditions hold}.
\end{aligned}
\end{equation}
Recalling \eqref{eq:4+}, it follows that
\begin{align}\nonumber
\PP(S_n>0)- \PP(S_n<0) &= \tfrac12\bigl[\PP(I\ge 1,\,n< s_I)-\PP(I\ge 1,\, n< m_I)\bigr]\\
&=\tfrac12\PP(I\ge 1,\, m_I\le n < s_I)\ge 0.
\label{eq:3n}
\end{align}
The \emph{strict} positivity of the last probability (with $n\ge 3$)
follows by consideration of sequences beginning $\H^2\T^{m}\H\cdots$
with $m\ge n$,
for which $M_0=1$, $M_1=m$, and $\gamma_1=\H\T^m\H$. Thus, $\tau_1=\H^2\T$,
and the interval $\{m_1,m_1+1,\dots,s_1-1\} = \{3 ,4,\dots,m+2\}$ contains
the value $n$.
\end{proof}

The probability of a tie is derived similarly to \eqref{eq:3n} as
\begin{equation}\label{eq:13n}
\PP(S_n=0) =\PP(I=0) + \PP(I\ge 1,\, n= s_I)
+ \tfrac12\PP(I\ge 1,\, m_I \le n < s_I).
\end{equation}
From \eqref{eq:3n}--\eqref{eq:13n} (as in \eqref{eq:51})
one may obtain representations for the probabilities $\PP(S_n>0)$ and $\PP(S_n<0)$.

\section{Proof of Theorem \ref{thm:0}($\mathrm{b,\,c}$)}\label{sec:3}

Returning to the representation \eqref{eq:4+} for $X$, we take the lower element in each matched pair of braces, 
and reorganize the
terms slightly to obtain the sequence
\begin{equation}\label{eq:7+}
Y:= \{L_0 \H^{-1} \tau_1\}\cdot \{L_1 \H^{-1}\tau_2\}\cdots.
\end{equation}
We emphasize that $Y$ and $X$ are very different, but nevertheless  the sequence $Y$
enables us to locate a set of renewal points that will aid the analysis of $X$, namely the following.
Let $R = \{m_0,m_1,m_2,\dots\}$ where $m_0=0$ and the $m_i$ for $i\ge 1$ are as marked in \eqref{eq:6+}; 
in other words,
$R$ is the set of indices of the rightmost elements of the braces in \eqref{eq:7+}. 
Since $(m_{i+1}-m_i: i \ge 0)$  are independent and identically distributed, $R$ is indeed a renewal process.
Such a process has the \emph{renewal property} that, conditional on $\{m\in R\}$ the process subsequent to time $m$
has the same distribution as the process starting at time $0$. 
Let $\pi_m:=\PP(m \in R)$.

Let $m\le n$, and recall the notation $Z_i$, $I=I_n$ from the last section.
By the renewal property, conditional on the event $\{m\in R,\, Z_I=a\}$,
the sequence $\psi_{m,n}:=X_{m+1}X_{m+2}\cdots X_n$
is a sequence of independent coin flips. For fixed $n$, the events 
$$
E_{m,n}=\{m\in R,\, Z_I=a,\,\psi_{m,n}=\T^{n-m}\},
\qq m=1,2,\dots,n,
$$
are disjoint with union 
$$
\bigcup_{m\le n} E_{m,n} = \{I\ge 1,\, Z_I=a,\,m_I\le n < s_I\}.
$$
By \eqref{eq:3n} and the above,
\begin{align}\nonumber
\PP(S_n>0)- \PP(S_n<0) &= \PP(I\ge 1,\, Z_I=a, \, m_I\le n < s_I)\nonumber\\
&=\sum_{m=1}^n \PP(m\in R, \psi_{m,n} = \T^{n-m}\mid Z_I=a)\PP(Z_I=a)\nonumber\\
&=\frac12\sum_{m=1}^n \PP(m\in R,\,\psi_{m,n}=\T^{n-m})
=\sum_{k=0}^{n-1} (\tfrac12)^{k+1}\pi_{n-k}.
\label{eq:4n}
\end{align}
Theorem \ref{thm:0}(b) follows once the following has been proved.

\begin{proposition}\label{prop:1}
We have that $\pi_m \sim c/\sqrt m$ as $m\to\oo$, where $c=1/(2\sqrt \pi)\approx 0.282$.
\end{proposition}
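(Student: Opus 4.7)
My plan is to compute the generating function $P(z)=\sum_{m\ge 1}\pi_m z^m$ in closed form and to read off the desired asymptotic from its singularity at $z=1$. The first step is to identify $\pi_m$ with an explicit event in the coin-flip sequence $X$. By the cycle decomposition of Lemma~\ref{lem:3}, the renewal epochs of $R$ are exactly those positions $m\ge 1$ at which $X_m=\H$ and $S_m=0$: each cycle $L_0,\gamma_1,\gamma_2,\ldots$ terminates at such a position, and within any A- or B-cycle the score is strictly negative or positive at every interior head, so no other position qualifies. Hence $\pi_m=\PP(X_m=\H,\,S_m=0)$.

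Classifying sequences $\om_1\cdots\om_m$ ending in $\H$ with $S_m=0$ by the number $p$ of their interior tail-runs---the constraints force exactly $p+1$ head-runs whose total length is $2p+1$ (compare Remark~\ref{rem:2})---and enumerating via compositions of the head- and tail-run lengths yields
\[
2^m\pi_m \;=\; 1 + \sum_{p\ge 1}\binom{2p}{p}\binom{m-2p-1}{p}, \qquad m\ge 1.
\]
Summing against $z^m$ with the aid of $\sum_{p\ge 0}\binom{2p}{p}x^p=(1-4x)^{-1/2}$ gives the closed form
\[
P(z) \;=\; \frac{z}{\sqrt{(2-z)(2-z-z^3)}}.
\]
Equivalently, the same expression emerges from the delayed-renewal identity $P(z)=\EE[z^M]/(1-\EE[z^T])$ after solving a quadratic for the first-passage generating function of the A-excursion in terms of the $\T$-indexed walk $S(r)$.

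Setting $\eps=1-z$, direct expansion gives $(2-z)(2-z-z^3)=(1+\eps)(4\eps-3\eps^2+\eps^3)=4\eps+O(\eps^2)$, so $P(z)\sim\tfrac12(1-z)^{-1/2}$ as $z\uparrow 1$; equivalently, $\PP(T>n)\sim 2/\sqrt{\pi n}$ for the inter-renewal distribution $T=M+Q$. The strong renewal theorem at exponent $\alpha=1/2$, at its core an instance of the 1733 local CLT of de Moivre to which the paper alludes, then delivers $\pi_m\sim 1/(2\sqrt{\pi m})$, giving $c=1/(2\sqrt\pi)$. The main obstacle is the closed-form computation of $P(z)$: either enumeration route (the direct combinatorial count above, or the first-passage analysis of the A-excursion) requires careful bookkeeping of the run structure, and the quadratic one encounters is delicate. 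Once $P(z)$ is in hand the singularity analysis is immediate, and the Tauberian/renewal step is standard, and may alternatively be bypassed by applying Stirling's formula directly to the combinatorial expression for $N_m=2^m\pi_m$.
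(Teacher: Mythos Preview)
Your approach is sound and reaches the right constant, but by a genuinely different route from the paper. The paper never writes a generating function: it identifies $\pi_m$ with twice $\PP(\om_{m-1}\om_m=\H\T,\,S_m=0)$, counts such sequences via their run decomposition as $\sum_s\binom{m-2s-1}{s-1}\binom{2s-1}{s-1}$, rewrites this as a sum of binomial point probabilities $\tfrac14\sum_s\PP(T_{m-2s-1}=s-1)\PP(T_{2s-1}=s-1)$, and extracts the asymptotic by a direct Stirling/local-CLT estimate on each factor. Your passage through $P(z)=z/\sqrt{(2-z)(2-z-z^3)}$ and its square-root singularity at $z=1$ is slicker once the closed form is in hand, but trades the elementary Stirling computation for singularity analysis or the strong renewal theorem (which, incidentally, is not really ``an instance'' of de Moivre's Gaussian local theorem---it is a heavy-tailed result living in a different circle of ideas).

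One slip to flag: the identification $\pi_m=\PP(X_m=\H,\,S_m=0)$ is not exact. The positions with $X_m=\H$ and $S_m=0$ are the cycle endpoints $M_0,\,M_0+N_1,\,M_0+N_1+N_2,\ldots$, a \emph{delayed} renewal process with initial delay $M_0$; the paper's $R$ is the non-delayed process with inter-renewal law $M+Q$, whose epochs sit at the $\tau$-endtimes $M_0+Q_1,\,M_0+N_1+Q_2,\ldots$. For instance $\pi_3=\tfrac14$ while $\PP(X_3=\H,\,S_3=0)=\tfrac18$. Your $P(z)$ is therefore the generating function of the delayed process---your own formula $P(z)=\EE[z^M]/(1-\EE[z^T])$ says exactly this---rather than of $\pi_m$ itself. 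The asymptotic is unaffected, since the delay $M$ has geometric tails and the two renewal measures share the same $c/\sqrt m$ behaviour, but you should either work with the $\tau$-endtimes (equivalently, sequences ending $\H\T$ with $S=0$) from the outset, or insert the one-line remark that the delayed and non-delayed renewal functions agree asymptotically.
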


The proof uses the de Moivre local central limit theorem (as in \cite{SW}),
though this amounts only to sustained use of the (de Moivre--)Stirling formula, \cite{KP}.

An outline of the proof of Theorem \ref{thm:0}(c) is included after that of the proposition.

\begin{proof}This proof is based on a combinatorial calculation associated with a certain subset of the renewal process 
$R=(m_i: i \ge 0)$, namely the subset $R_X=\{m_i: i\ge 1,\, Z_i=a\}$.
It is immediate that $R_X \subseteq R$. Moreover, by the independence of  $(m_i)$ and $(Z_j)$,
\begin{equation}\label{eq:31n}
\PP(m\in R_X) = \tfrac12 \PP(m\in R), \qq m\ge 1.
\end{equation} 

By Lemma \ref{lem:3}, the event $\{m\in R_X\}$ is the set of all $\om$ with length $m$ 
that satisfy $\om_{m-1}\om_m=\H\T$ and $S(\om)=0$.
Let $h$ be the number of heads in such $\om$, and $r$ the number of runs (of either heads or tails). 
If $r$ is even, (\resp, odd), then such sequences begin $\H$ (\resp, $\T$), and they invariably
finish with a tailrun of exactly one tail. By Remark \ref{rem:2}, since the aggregate score is $0$,
we have $h=r$ if $r$ is even, and $h=r-1$ if $r$ is odd.

We use the fact that the number of $p$-partitions of the integer $q$ is 
$\binom {q-1}{p-1}$.
Let $m\ge 5$, say.
By partitioning the heads and tails into $r$ runs subject to the above, and interleaving headruns and tailruns,
we obtain that
$$
\vert\{m\in R_X\}\vert= \Si_1+\Si_2
$$
where
\begin{align*} %\label{eq:6n}
\Si_1&= \sum_{r \text{ odd}} 
\binom{m-h-2}{\frac12(r+1)-2} \binom{h-1}{\frac12(r-1)-1}
=\sum_{r \text{ odd}} 
\binom{m-r-1}{\frac12(r-3)} \binom{r-2}{\frac12(r-3)} ,\\
\Si_2&= \sum_{r\text{ even}}
\binom{m-h-2}{\frac12 r-2} \binom{h-1}{\frac12 r-1}
= \sum_{r\text{ even}}
\binom{m-r-2}{\frac12 r-2} \binom{r-1}{\frac12 r-1}.
%\label{eq:7n}
\end{align*}
Set $r=2s+1$ (\resp, $r=2s$) in $\Si_1$ (\resp, $\Si_2$) and add to obtain
\begin{align}\nonumber
\vert\{m\in R_X\}\vert
 &= 
\sum_{s=1}^\oo \left[\binom{m-2s-2}{s-1}+ \binom{m-2s-2}{s-2}\right]
\binom{2s-1}{s-1}\\
&=\sum_{s=1}^\oo \binom{m-2s-1}{s-1}\binom{2s-1}{s-1}.\label{eq:9n}
\end{align}
We could now expand the binomial coefficients and use Stirling's formula directly. 
These calculations have been done already by de Moivre \cite{deM1,SW}, and we choose to refer to his results, as follows.

By \eqref{eq:9n},
\begin{align}
\PP(m\in R_X) &= \left(\frac12\right)^m \vert\{m\in R_X\}\vert\nonumber\\
&= \frac14 \sum_{s=1}^\oo \PP(T_{m-2s-1}= s-1)\PP(T_{2s-1}=s -1),\label{eq:11n}
\end{align}
where $T_k$ has the binomial distribution with paramters $k$ and $\frac12$. 
By Stirling's formula (or \cite[Thm 1.1]{SW}),
\begin{equation}\label{eq:10n}
\PP\left(T_{2s-1}=s -1\right) \sim \frac 1{\sqrt{\pi s}}\q\text{as } s\to\oo.
\end{equation}

We may occasionally use real numbers in the following where integers are expected.
The term $\PP(T_{m-2s-1}=s -1)$ in \eqref{eq:11n} is a maximum when 
$m-2s-1=2(s-1)$, which is to say that $s=\frac14 (m+1)$. Let $\gamma\in(\frac12,\frac23)$.
We may restrict the summation in \eqref{eq:11n} to values of $s$ 
satisfying $\vert s-\frac14 m\vert < m^\gamma$. To see this, note that
\begin{align*}
\sum_{s\ge  \frac14m+ m^\gamma} \PP(T_{m-2s-1} = s-1)
&\le \sum_{s\ge  \frac14m+ m^\gamma} \PP(T_{m/2} \ge s-1),
\end{align*}
which tends to $0$ as $m\to\oo$ by the moderate-deviation theorem of Cram\'er \cite{Cram}
(or, for a more modern treatment, see Feller \cite[p.\ 549]{F2}\footnote{Feller's theorem
may be stated as follows. Let $(Z_i)$ be a sequence of independent identically distributed random variables with zero 
mean and unit variance, and with finite moment generating function in some neighborhood of $0$, 
and write $S_n=Z_1+Z_2+\cdot+ Z_n$. The central limit theorem
states that, for $x\in\RR$, $\PP(S_n>x\sqrt n)/[1-\Phi(x)] \to 1$ as $n\to \oo$, where $\Phi$ 
denotes the standard Gaussian
distribution function. The same limit is valid as $x,n\to\oo$ in such a way that $x=\o(n^{1/6})$.}).
We have used the fact that $T_k$ is 
stochastically increasing in $k$ in that, for $t\ge 0$, $\PP(T_k > t)$ is non-decreasing in $k$. 
A similar argument applies for $s\le \frac14 m - m^\gamma$.

Suppose $\vert s - \frac14 m\vert < m^\gamma$.
By \cite[Thm 1.2]{SW}, there is an absolute constant $C$ such that
$$
\left\vert \PP(T_{m-2s-1}=s -1)- \sqrt{\frac 2{\pi (m-2s-1)}} 
\exp\left(-\frac{(4s-m-1)^2}{2(m - 2s-1)}\right)\right\vert 
\le \frac C {m^{2/3}}.
$$
It follows that
$$
\sum_{\vert s - \frac14 m\vert < m^\gamma} 
\PP(T_{m-2s-1}=s -1)
$$
deviates from 
$$
\Psi_m:=\sum_{\vert s - \frac14 m\vert < m^\gamma}\sqrt{\frac 2{\pi (m-2s-1)}}
\exp\left(-\frac{(4s-m-1)^2}{2(m - 2s-1)}\right)
$$
by at most $C(2m^\gamma+1)/m^{2/3}$, which tends to $0$.
Express $\Psi_m$ as an integral, make the change of variable 
$$
\beta = \frac{4s-m-1}{\sqrt{m-2s-1}}
$$
and let $m\to\oo$ to obtain, by the dominated convergence theorem, that
$$
\Psi_m \to  \frac12\int_{-\oo}^\oo \frac1{\sqrt{2\pi}} e^{-\frac12 \beta^2} \, d\beta = \frac12.
$$
Therefore, the terms $\PP(T_{m-2s-1}=s -1)$ in \eqref{eq:11n} are (asymptotically)
concentrated in the interval $\frac14m\pm m^\gamma$, with total weight $\frac12$. Hence,
by \eqref{eq:31n} and \eqref{eq:11n}--\eqref{eq:10n}, 
$$
\pi_m\sim \frac2{8\sqrt{\pi m/4}} =   \frac1{2\sqrt{\pi m}},
$$
as claimed.
\end{proof}

\begin{proof}[Outline proof of Theorem \ref{thm:0}(c)]
One may deduce part (c) from \eqref{eq:13n} by adapting the argument leading to \eqref{eq:4n}.
Alternatively, one may perform a direct calculation as above, and there follows a sketch of this. 
Let $V_m$ be the set of vectors $\om\in\Om_m$ such that $S(\om)=0$.
Elements $\om\in V_m$ may be expressed as interleaved headruns and tailruns, with the counts
of heads and tailruns being balanced by the condition $S(\om)=0$ (see Remark \ref{rem:2}). 
Such $\om$ may start with either $\H$ or $\T$, and each case leads to two terms as in the first line of
\eqref{eq:9n}. Thus, $\vert V_m\vert$ is the sum of four terms, each being the product of two 
binomial coefficients. The analysis continues as in the above proof.
\end{proof}

\section*{Acknowledgments} 
GRG thanks Daniel Litt for a sweet question, and Charles McGinnes for drawing his attention to it.
He thanks Richard Weber, David Stirzaker, and Svante Janson for useful correspondence, and is grateful to the referees
for their valuable reports which have led to corrections and improvements.

\providecommand{\bysame}{\leavevmode\hbox to3em{\hrulefill}\thinspace}
\providecommand{\MR}{\relax\ifhmode\unskip\space\fi MR }
% \MRhref is called by the amsart/book/proc definition of \MR.
\providecommand{\MRhref}[2]{%
  \href{http://www.ams.org/mathscinet-getitem?mr=#1}{#2}
}
\providecommand{\href}[2]{#2}

%\bibliography{twitter}
%\bibliographystyle{amsplain}
\end{document}